     \def\section{\@startsection{section}{1}%
     \z@{.7\linespacing\@plus\linespacing}{.5\linespacing}%
     {\bfseries
     \centering
     }}
     \def\@secnumfont{\bfseries}
\newtheorem{theorem}{Theorem}[section]
\newtheorem{lemma}[theorem]{Lemma}
\theoremstyle{definition}
\theoremstyle{remark}
\newtheorem{remark}[theorem]{Remark}
\numberwithin{equation}{section}
\newcommand{\cG}{{\mathcal G}}
\newcommand{\cK}{{\mathcal K}}
\newcommand{\cM}{{\mathcal M}}
\newcommand{\cN}{{\mathcal N}}
\newcommand{\cP}{{\mathcal P}}
\newcommand{\cS}{{\mathcal S}}
\newcommand{\bbE}{{\mathbb E}}
\newcommand{\bbP}{{\mathbb P}}
\newcommand{\bbR}{{\mathbb R}}
\begin{document}

\title[Random matrices]{Large Deviations for Random Matrices}
\author{Sourav Chatterjee}
\address{Courant Institute, New York University, New York, NY, 10012, USA}
\email{sourav@cims.nyu.edu}
\urladdr{http://www.cims.nyu.edu/~sourav/}
\author{S.R.S.Varadhan}
\thanks{\noindent Sourav Chatterjee's  research was partially supported by NSF grants DMS-0707054 and DMS-1005312, and a Sloan Research Fellowship.}
\thanks{S. R. S. Varadhan's research was partially supported by NSF grants DMS-0904701 and OISE-0730136.}
\address{Courant Institute, New York University, New York, NY, 10012, USA}
\email{varadhan@cims.nyu.edu}
\urladdr{http://www.cims.nyu.edu/~varadhan/}
\subjclass[2000] {60B20; 15B52}

\keywords{Large Deviations, Eigenvalues, Random Matrices}

\begin{abstract}
We prove a large deviation result for  a random symmetric $n\times  n$ matrix with independent identically distributed entries 
to have a few eigenvalues of size $n$. If the spectrum $\mathcal S$ survives when the matrix is rescaled by a factor of $n$,
it can only be the eigenvalues of a Hilbert-Schmidt kernel $k(x,y)$ on $[0,1]\times [0,1]$. The rate function for $k$ is 
$I(k)=\frac{1}{2}\int h(k(x,y) dxdy$  where $h$ is the Cram\'{e}r rate function for the common distribution of the entries that is assumed to have a tail decaying faster than any Gaussian.
The large deviation for $\mathcal S$ is then obtained by contraction.
\end{abstract}

\maketitle

\noindent 

\section{ Introduction}
We are interested in the large deviation behavior of random $n\times n$ symmetric matrices $X(\omega)=\{x_{i,j}(\omega)\}$ where the entries for $j\ge i$ are independent identically distributed real random variables having  a common distribution $\mu$.   Let $\Lambda_n(\omega)=\{\lambda^n_j(\omega);1\le j\le n\}$ be the set of  $n$ real eigenvalues of this random matrix.  If we assume that  the  mean is  $0$ and the variance is equal to $\sigma^2$, according to a theorem of Wigner \cite{wigner58}, when divided by $\sqrt{n}$, the empirical distribution of these eigenvalues converges in probability  to the semi-circle law, i.e.\ for any bounded continuous function $f: \bbR \to \bbR$ 

 $$
 \lim_{n\to\infty}\frac{1}{n}\sum_{j=1}^n f\biggl(\frac{\lambda_j(\omega)}{\sqrt{n}}\biggr)=\int f(y)\phi_\sigma(y) dy
 $$
 in probability, where
 \begin{align*}
 \phi_\sigma(y)=
 \begin{cases}
 0\ &\ {\rm if}\ |x|\ge 2\sigma,\\
 \frac{1}{2\pi\sigma^2} \sqrt{4\sigma^2-x^2}\ &\ {\rm if}\ |x|\le 2\sigma.
 \end{cases}
 \end{align*}
 In fact it is known \cite{bai99} that under suitable additional assumptions, for any $\epsilon > 0$,  
 $$
 \bbP\bigl[\max_{1\le j\le n}|\lambda^n_j|\ge (2\sigma+\epsilon)\sqrt{n}\bigr]\to 0.
 $$
 Large deviation results for the distribution of eigenvalues to be different from the Wigner distribution has been considered by Ben Arous and Guionnet \cite{bg97}. If we decide to divide by $n$ rather  than $\sqrt{n}$, and denote the resulting  spectrum by $S_n(\omega)=\frac{1}{n}\Lambda_n(\omega)$, then
 $$
 \bbP[\sup_{\lambda\in S_n} |\lambda|\ge \epsilon]\to 0
 $$
 If we drop the assumption that the mean is $0$, then there will be one large eigenvalue $\lambda_{\it max} $  of size $n$ in $\Lambda_n$, with $\frac{\lambda_{\it max}}{n}\simeq \int x d\mu$  and the remaining eigenvalues will follow the semi-circle law as before when divided  by
 $\sqrt n$.  It is also possible  to consider scaling by $n^\alpha$ for $\frac{1}{2}<\alpha< 1$, but they would likely behave like moderate deviations and exhibit Gaussian behavior with rate $n^{2\alpha}$.

Let $\cS$ denote the set of all finite  or countable collections $S=\{\lambda_j\}$  of positive as well as negative real  numbers, with repetitions allowed,  that  have the property
 $$
 \sum_{\lambda\in S}  |\lambda|^2= \sum_j |\lambda_j|^2<\infty. 
 $$ 
They are  all  possible spectra of self adjoint  Hilbert-Schmidt operators. The topology on  $\cS$ will be  ordinary convergence as real numbers of the corresponding eigenvalues  outside any arbitrarily small interval around $0$. Equivalently it is the minimal topology such that for any bounded continuous  function $f$ that is $0$ in some interval around $0$, the sum $\sum_{\lambda\in \cS} f(\lambda)$   is continuous as a map of $\cS\to \bbR$.  One can easily  construct a metric for this topology.  We enumerate separately the positive and negative  values in $S$ in decreasing order of their absolute values  as $\{u_j^+\},\{u_j^-\}$. If  one or both of them is only a finite set or empty we augment it by adding  $0$'s. If two points $S,S'$  in $\cS$ are  enumerated as $\{u_j^+\},\{u_j^-\}$ and $\{v_j^+\},\{v_j^-\}$, we define the distance  
$$
d(S,S'):=\sum_{j=1}^\infty \frac{|u_j^+-v_j^+|}{2^{j+1}}+\sum_{j=1}^\infty\frac{|u_j^--v_j^-|}{2^{j+1}}
$$
The random matrix yields a random spectrum and after normalization by $n$, we obtain a random element of $\cS$. Its distribution yields a  sequence  $\{P_n\}$ of probability measures on $\cS$. We will prove a large deviation result for $P_n$ with rate $n^2$.  We assume that for all $\theta > 0$, 
\begin{equation}\label{eqbound}
\int  \exp[\theta\,x^2] \mu(dx)<\infty. 
\end{equation}
In particular  this implies that the moment generating function $M(\theta)=\int e^{\theta\,x}\mu(dx)$ is finite  for all $\theta$ and satisfies
 $$
 \limsup_{|\theta|\to\infty}\frac{1}{\theta^2}\log M(\theta)=0
 $$
and the conjugate rate function of Cram\'{e}r
\begin{equation}\label{cramer}
 h(x)=\sup_\theta[\theta\, x-\log M(\theta)]
\end{equation}
 satisfies
 $$
  \liminf_{|x|\to\infty}\frac{h(x)}{x^2}=+\infty.
 $$
 The  condition (\ref{eqbound}) is important, because an eigenvalue of size $n$ can be produced by a single entry of size $n$ in the random matrix  and we would like this to have probability that is super-exponentially small in the scale $n^2$.  
 
\medskip\noindent
The random  symmetric $n\times n$  matrix $X(\omega)=\{x_{i,j}(\omega)\}$  is first mapped into a symmetric  kernel $k(x,y,\omega)$  
\begin{equation}\label{def:k}
 k(x,y,\omega)=\sum_{i,j=1}^n x_{i,j}(\omega){\bf 1}_{J^n_i}(x){\bf 1}_{J^n_j}(y)
\end{equation}
where $J^n_i$ is the interval $[\frac{i-1}{n},\frac{i}{n}]$. This induces a family of probability measures $Q_n$ on  the space of symmetric kernels $k(x,y)$ on $D=[0,1]\times [0,1]$. We will restrict ourselves to $\cK=\{k:\int\int_D |k(x,y)|^2 \,dxdy <\infty\}$. Then $k(x,y)$ defines on $L^2[0,1]$ a Hilbert-Schmidt operator which has a countable spectrum with $0$ as  the only  limit point. 
Actually for each fixed $n$ the range of the map $X\to k$ is a finite dimensional subspace of  simple functions $\cK_n\subset \cK$. The nonzero spectrum of $k(x,y,\omega)$ is the same as that of $\{\frac{x_{i,j}}{n}\}$ and we can obtain  $P_n$  from $Q_n$ through the natural map $\cK\to \cS$ that takes any $k$ to its  set of  eigenvalues.  We define on $\cK$ the following rate function
\begin{equation}\label{rate}
 I(k(\cdot,\cdot))=\frac{1}{2}\int_0^1\int_0^1 h(k(x,y))dxdy
\end{equation}
with $h$ given by (\ref{cramer}).

\medskip
Any  permutation $\sigma \in \Pi(n)$ of the rows and columns of $X$, mapping $\{x_{i,j}\}\to \{x_{\sigma(i),\sigma(j)}\}$ leaves the set of eigen-values of $X$ invariant and the group $G$ of measure preserving transformations $\sigma$ of $[0,1]$ onto itself   lifts to an action on $\cK$  mapping  $k\to \sigma k$ where  $ \sigma k (x,y):=k(\sigma x,\sigma y)$. The map $k\to\sigma k$ leaves $I(k(\cdot,\cdot))$ as well as the spectrum $S(k)$ of $k$ invariant. For establishing the large deviations of $P_n$ on $\cS$ it is therefore enough to prove a large deviation principle for the images ${\tilde Q}_n$ of $Q_n$  on $\tilde{\cK}=\cK/G$.
 
 \medskip
 We will be working with the space $\cK$ of symmetric kernels $k(x,y)$ on $D$.   If $I(k)<\infty$, then the operator defined by $k(x,y)$ on $L^2[0,1]$ is Hilbert-Schmidt and has a countable spectrum with $0$ as  the only  limit point. 
 We need to show some sort of continuity of the map ${\tilde\cK}\to \cS$  mapping the $G$-orbit $\tilde k$ of   $k$ to its spectrum $S(k)$,  in order to transfer the large deviation result from $\tilde \cK$ to $\cS$. This requires a topology on $\tilde{\cK}$ that will be inherited from $\cK$.  The weak topology on $\cK$ turns out to be too weak and the strong or $L^1$ topology too strong. What works is the  topology induced by the cut  metric
\begin{equation}\label{cut1}
d_\square(k_1,k_2)=\sup_{|\phi|\le 1\atop|\psi |\le 1}\biggl|\int (k_1(x,y)-k_2(x,y))\phi(x)\psi(y) dxdy\biggr| \ ,
\end{equation}
$\phi,\psi$ being Borel measurable functions on $[0,1]$.  Equivalently
\begin{equation}\label{cut2}
d_\square(k_1,k_2)=\sup_{A\times B}\biggl|\int_{A\times B} (k_1(x,y)-k_2(x,y))dxdy\biggr|
\end{equation}
where the supremum is taken over all Borel subsets $A, B$ of $[0,1]$. The induced metric on $\tilde{\cK}$ is
$$
d_\square(k_1,k_2)=\inf_\sigma d_\square(\sigma k_1,k_2)=\inf_\sigma d_\square( k_1, \sigma k_2)=\inf_{\sigma_1,\sigma_2} d_\square( \sigma_1k_1, \sigma_2 k_2)
$$
where $\sigma k(x,y)=k(\sigma x,\sigma y)$.  We can define on $\cS$ the rate function
 $$
 J(S)=\inf_{k: S(k)=S} I(k).
 $$
 Our main result is the following. 
 \begin{theorem}\label{main}
 Under assumption (\ref{eqbound})  the sequence of measures $P_n$ on $\cS$ satisfies a large deviation property with rate function $J(S)$, i.e for closed $C\subset \cS$
 $$
 \limsup_{n\to\infty}\frac{1}{n^2}\log P_n (C)\le -\inf_{S\in C} J(S)
 $$
 and for $U\subset\cS$ that are open
 $$
 \liminf_{n\to\infty}\frac{1}{n^2}\log P_n (U)\ge -\inf_{S\in U} J(S).
 $$
 \end{theorem}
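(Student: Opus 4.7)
My plan is to prove the theorem in two stages. First I would establish a large deviation principle for the quotient kernel measures $\tilde Q_n$ on $\tilde{\cK}$, equipped with the cut metric $d_\square$, at speed $n^2$ and with rate function $I$ from (\ref{rate}). Second, I would apply the contraction principle to the spectrum map $\tilde k\mapsto S(k)$ to transfer the LDP to $\cS$. Since $J$ on $\cS$ is defined as the infimum of $I$ over the fibers of this map, once the map is shown to be continuous in the cut topology on bounded $L^2$ balls, Theorem \ref{main} follows at once.

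For the LDP on $\tilde{\cK}$, the first task is exponential tightness at speed $n^2$. Assumption (\ref{eqbound}) makes $\bbE[\exp(\theta x_{i,j}^2)]$ finite for every $\theta$, so a Chebyshev bound on the sum of the $\binom{n+1}{2}$ independent squared entries gives
\begin{equation*}
\bbP\Bigl[\,n^{-2}\sum_{i\le j} x_{i,j}^2 \ge M\,\Bigr] \le \exp\bigl(-n^2(\theta M - C_\theta)\bigr)
\end{equation*}
for every $\theta>0$, hence super-exponential decay as $M\to\infty$; combined with compactness of bounded $L^2$ balls modulo measure-preserving relabellings in the cut topology (as in the graphon theory of Lov\'asz and Szegedy), this yields exponential tightness of $\tilde Q_n$. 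For the local estimates, given a simple step kernel $k_0$ constant on an $m\times m$ grid with values $a_{ij}$, I would match $m\times m$ block averages of $X/n$ against the $a_{ij}$; those block averages are sums of $\Theta(n^2/m^2)$ i.i.d.\ entries, so Cram\'er's theorem applied independently on each block produces probability $\exp(-n^2(I(k_0)+o_\ve(1)))$. Approximating a general $k$ with $I(k)<\infty$ by such step kernels, using convexity and lower semicontinuity of $h$, upgrades this to the full local LDP for $\tilde Q_n$ on $(\tilde{\cK},d_\square)$.

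The transfer to $\cS$ rests on continuity of the spectrum map from $\tilde{\cK}$ into $\cS$ on $L^2$-bounded sets. The key point is that a $d_\square$-convergent sequence $\tilde k_n\to\tilde k$ with uniformly bounded Hilbert--Schmidt norm admits relabellings which converge in Hilbert--Schmidt norm (weak $L^2$ convergence plus preservation of $\|k\|_2$), and so the corresponding self-adjoint operators converge in operator norm with multiplicity, giving convergence of their ordered nonzero eigenvalues in the topology of $\cS$. Level sets of $I$ are bounded in $L^2$ by the super-quadratic growth of $h$ inherited from (\ref{eqbound}) and closed in $d_\square$ by lower semicontinuity of $I$, so $I$ is a good rate function and the contraction principle yields the LDP for $P_n$ with rate function $J(S)$.

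I expect the main obstacle to be the cut-metric upper bound in the kernel LDP, since a $d_\square$-ball around a step kernel $k_0$ is not a product event on the entries: two matrices can be $d_\square$-close while differing substantially in most entries, through cancellations that the test functions $\phi,\psi$ in (\ref{cut1}) cannot detect. To handle this I would apply a weak-regularity lemma to replace $d_\square$-closeness by matching of block averages over a suitably chosen partition, and then reduce to standard Cram\'er bounds for the block sums; the super-Gaussian tail (\ref{eqbound}) is critical here to rule out the competing scenario in which a few exceptionally large entries of the matrix realize a large eigenvalue at a cost much smaller than $n^2 I(k_0)$.
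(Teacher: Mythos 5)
Your overall architecture matches the paper's: first an LDP for $\tilde Q_n$ on $(\tilde{\cK},d_\square)$ with rate $I$, then a transfer to $\cS$ via the spectrum map, and your remarks on exponential tightness, Cram\'er block estimates for the lower bound, and the need for a weak-regularity argument to handle the cut-metric upper bound all parallel what the paper does. However, there is a genuine error in the transfer step that invalidates the contraction argument as you have stated it.

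You claim the spectrum map $\tilde k\mapsto S(k)$ is continuous in $d_\square$ on $L^2$-bounded sets, on the grounds that cut convergence with uniformly bounded Hilbert--Schmidt norm yields, after relabellings, weak $L^2$ convergence together with preservation of $\|k\|_2$, hence strong $L^2$ convergence. This is false: weak convergence only gives lower semicontinuity of the norm, and there is no preservation in general. Concretely, take $k_n = n\,\mathbf{1}_{[0,1/n]^2}$. Then $\|k_n\|_{HS}=1$ for every $n$, $d_\square(k_n,0)=1/n\to0$, and no measure-preserving relabelling changes either fact; yet the operator $k_n$ has eigenvalue exactly $1$, so $S(k_n)$ does not converge to $S(0)=\{0\}$ in $\cS$. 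So the spectrum map is discontinuous on $L^2$-bounded sets in the cut topology, and the contraction principle cannot be applied there directly. This is precisely why the paper proves continuity only on the $L^\infty$-balls $\cK^\ell$ (Lemma~\ref{cont}) --- there, $|\lambda|\le\ell$ and $\lambda^{-3}f(\lambda)$ can be approximated by polynomials, and moments $\sum\lambda^m$ for $m\ge3$ are cut-continuous --- and then supplements this with a separate \emph{truncation} step: replace $x_{i,j}$ by $f_\ell(x_{i,j})$, get the LDP for the truncated kernel whose spectrum map is genuinely continuous, and use (\ref{eqbound}) to show $Q_n[\|k-k_\ell\|_{HS}\ge\epsilon]$ decays super-exponentially in $n^2$, together with the eigenvalue perturbation bound $|\lambda_j(A)-\lambda_j(B)|\le\|A-B\|_{HS}$. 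Your last sentence about the super-Gaussian tail ``ruling out a few exceptional large entries'' is the right instinct, but it needs to be developed into this truncation machinery rather than used to justify a continuity claim that does not hold.
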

 This is based on a large deviation principle for $\tilde Q_n$  on $\tilde\cK$ in the cut topology with rate function $I(k)$.
  \begin{theorem}\label{main2}
 Under assumption (\ref{eqbound}) the sequence of measures $\tilde Q_n$ on $\tilde \cK$ satisfies a large deviation property with rate function $I(\tilde k)$, i.e for closed $C\subset \tilde\cK$
 $$
 \limsup_{n\to\infty}\frac{1}{n^2}\log \tilde Q_n (C)\le -\inf_{\tilde k\in C} I(\tilde k)
 $$
 and for $U\subset\tilde \cK$ that are open
 $$
 \liminf_{n\to\infty}\frac{1}{n^2}\log \tilde Q_n (U)\ge -\inf_{\tilde k\in U} I(\tilde k).
 $$
 \end{theorem}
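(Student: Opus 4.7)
My plan follows the classical three-step scheme: exponential tightness, lower bound by change of measure, and upper bound by discretization plus Cram\'er's theorem. The engine is a block decomposition of the kernel: for each $m$ dividing $n$, partition $[0,1]$ into $m$ equal intervals and let $\pi_m:\cK\to\cK$ be the conditional-expectation projection onto kernels constant on each of the $m^2$ grid squares. Applied to the random kernel (\ref{def:k}), each block of $\pi_m(k(\cdot,\cdot,\om))$ is a sample mean of independent copies of $\mu$ (about $(n/m)^2$ of them for off-diagonal blocks, about $n^2/(2m^2)$ for diagonal ones), so the classical Cram\'er theorem on $\bbR^{m(m+1)/2}$ yields an LDP for $\pi_m(k(\cdot,\cdot,\om))$ at rate $n^2$ with rate function $I(\pi_m k)=\frac{1}{2}\iint h(\pi_m k)\,dxdy$. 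This finite-dimensional LDP will drive everything.

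For the lower bound at a point $k$ with $I(k)<\infty$, I approximate $k$ by $k_m:=\pi_m k$ and change measure by exponentially tilting each entry $x_{ij}$ by $\exp[\te_{ij}^m x_{ij}]$, where $\te_{ij}^m$ is the Cram\'er dual of the block value of $k_m$ at $(i,j)$. The entropy cost of the tilt is $n^2 I(k_m)+o(n^2)$, and under the tilted law a variance estimate on $d_\square$ shows that the random kernel sits within $O(m/\sqrt n)$ of $k_m$ in cut metric with probability $1-o(1)$. Sending $m\to\infty$ one has $k_m\to k$ in $L^2$, hence in cut metric, and $I(k_m)\uparrow I(k)$ by the martingale property of block averages together with convexity of $h$ and Fatou's lemma; the open-set lower bound follows.

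For the upper bound, exponential tightness at scale $n^2$ comes from (\ref{eqbound}) via a moment-generating-function estimate $\bbP[\iint k^2\,dxdy\ge L]\le e^{-cLn^2}$, combined with the fact that $L^2$-balls in $\cK$ are cut-precompact modulo $G$ by a Frieze--Kannan type weak regularity lemma. Given a closed $C\subset\tilde\cK$ and $\del>0$, I write $\tilde Q_n(C)\le (\tilde Q_n\circ\pi_m^{-1})(N_\del(\pi_m C))$ and invoke the discrete LDP for the pushforward. Passing to the limit $m\to\infty$ and $\del\to 0$ requires a lower semicontinuity property of $I$ in the cut metric: if $k_m'\in\pi_m\cK$ satisfy $d_\square(k_m',\pi_m\tilde k_m)<\del$ for $\tilde k_m\in C$ with $\tilde k_m\to\tilde k\in C$, one needs $\liminf_m I(k_m')\ge I(\tilde k)$.

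This Fatou-type statement in the cut metric is the main technical obstacle. In the bounded-entry graphon setting the analogous lemma is the linchpin of the argument; here one must carry it over to unbounded kernels. The crucial input is the super-Gaussian moment bound (\ref{eqbound}), which provides the $L^2$ mass control through the exponential tightness estimate above and forces $M(\te)$ to be finite for all $\te$, ensuring that kernels with bounded $I$ cannot concentrate mass at arbitrarily large values. Without (\ref{eqbound}) a single entry of order $n$ could by itself produce an eigenvalue of order $n$ with probability not negligible on the $e^{-n^2}$ scale, and the stated LDP would fail.
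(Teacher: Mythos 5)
Your lower bound is essentially the paper's argument: replace $f$ by its block average (convexity of $h$ gives $I(\pi_m f)\le I(f)$), tilt each entry by the Cram\'er dual of the corresponding block value, pay relative entropy $n^2 I(\pi_m f)+o(n^2)$, and conclude with the entropy (Jensen) inequality. One imprecision: ``a variance estimate on $d_\square$'' does not suffice, because $d_\square(k,\pi_m f)$ is a supremum over exponentially many grid rectangles (by Lemma \ref{partition}, over $2^{O(n)}$ pairs $A,B$); you need an exponential Cram\'er/Hoeffding-type bound per rectangle, of order $e^{-cn^2}$ or at least $e^{-cn}$, so that the union bound over $2^{O(n)}$ pairs can be absorbed. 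This is how the paper argues, and your step is repairable, but as written it is not a proof.

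The upper bound, however, has genuine gaps that your proposal names but does not close. First, your scheme requires lower semicontinuity (the ``Fatou-type statement'') of $I$ along cut-metric approximations of \emph{unbounded} kernels, and you explicitly leave it as ``the main technical obstacle'' without resolving it; without it the passage $m\to\infty$, $\delta\to 0$ in your discretized bound proves nothing. Second, the claim that $L^2$-balls of kernels are cut-precompact modulo $G$ is itself a nontrivial theorem for unbounded kernels; it does not follow from the Frieze--Kannan lemma as stated and is not in the literature the paper relies on (Lov\'asz--Szegedy compactness is only for uniformly bounded kernels). Third, an LDP for $\tilde Q_n$ requires bounding $Q_n$ of the full $G$-orbit $\cup_{\sigma\in G}B(\sigma f,\epsilon)$, not of a single ball, and your outline never addresses this union. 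The paper's proof is organized precisely to avoid these issues: it first truncates the entries at level $\ell$, so that $\tilde\cK^\ell$ is compact in the cut metric and only a local upper bound is needed; it proves that local bound by a half-space/Chebyshev estimate in which the cut ball is weakly closed and $I$ is weakly lower semicontinuous by convexity of $h$; it reduces the union over $G$ to finitely many balls via the multicolor Szemer\'edi regularity lemma (Lemma \ref{KS2}), the residual $n!=e^{O(n\log n)}$ factor being negligible at scale $n^2$; and it removes the truncation at the very end using the superexponential Hilbert--Schmidt estimates of Section 5, which is exactly where assumption (\ref{eqbound}) enters. Until you either prove your Fatou lemma and the $L^2$ compactness modulo $G$ for unbounded kernels, or insert a truncation step of this kind and control the truncation error superexponentially, your upper bound (and hence Theorem \ref{main2}) is not established.
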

To make the connection we need  the map $k\to S(k)$ to be  continuous in the cut topology. It is valid, provided we restrict it to
sets $\cK^\ell$ of the form $\cK^\ell=\{k:|k(x,y)|\le \ell\}$ for some $\ell<\infty$. This requires truncation  at level $\ell$ and then removing the cut-off. Condition  (\ref{eqbound}) provides super-exponential bounds in the Hilbert-Schmidt norm for the error due to cutoff and that is used to complete the proof.
 
 \medskip
Similar methods were used in \cite{cv10} to study the large deviation behavior of  the number of triangles or other finite subgraphs in random graphs as the number of vertices goes to $\infty$ but the probability of an edge being connected remains fixed at some $p>0$. This will correspond to each $x_{i,j}$ taking the values $0$ or $1$.

 \section{Some  useful lemmas.}
 
 \medskip
 We will be working with partitions of the unit interval $[0,1]$ into  a finite disjoint union of subintervals.  We will not worry about the end points. We can adopt any  convention that makes it a true partition. For each partition  $\cP$ of the unit interval into $m$  subintervals $\{J_{i}\}$
 there is a  corresponding  partition of $D$ into  $m^2$ sub-squares $\{J_i\times J_j\}$. 
 
 \medskip
 For each integer $m$ we have the  special partition $\cP_m$ of the unit interval into $m$ equal subintervals and they will be denoted by $J^m_{i}=[\frac{i-1}{m},\frac{i}{m}]$.  We denote by ${\cK}_m\subset \cK$ the space of symmetric kernels of the form
 $$
 f(x,y)=\sum_{i,j} f_{i,j}{\bf 1}_{J^m_{i}}(x){\bf 1}_{J^m_{j}}(y)
 $$
 This provides a faithful representation of  symmetric matrices of size $m\times m$ as elements of $\cK_m$. If $|f_{i,j}|\le \ell$, then
 the corresponding $f\in {\cK}^\ell_m=\cK^\ell\cap\cK_m$.
 The following is a simple, but useful lemma.
 
\begin{lemma}\label{partition}
 If   $f(x,y)=\sum f_{i,j}{\bf 1}_{J_i}(x){\bf 1}_{J_j}(y)$ and  $g(x,y)=\sum g_{i,j}{\bf 1}_{J_i}(x){\bf 1}_{J_j}(y)$  are both simple functions with respect to the same partition $J_1,\ldots, J_m$ of $[0,1]$, then in calculating the distance
$$
d_\square(f,g)=\sup_{A\times B}\biggl|\int_{A\times B} (f(x,y)-g(x,y))dxdy\biggr|
$$
the sets $A,B$ can be restricted to sets of the form $\bigcup_{i\in \cN}J_i$ where $\cN\subset \{1,2,\ldots,m\}$.
\end{lemma}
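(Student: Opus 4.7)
My plan is to reduce the problem to maximizing a bilinear form over a product of cubes, then exploit the fact that a linear function on a cube attains its extrema at the vertices.

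First, set $h := f-g$, which is again simple with respect to the same partition, with block values $h_{i,j} := f_{i,j}-g_{i,j}$. For arbitrary Borel sets $A,B\subset[0,1]$, introduce the parameters
\[
\alpha_i := \frac{|A\cap J_i|}{|J_i|},\qquad \beta_j := \frac{|B\cap J_j|}{|J_j|},
\]
which lie in $[0,1]$. A direct computation gives
\[
\int_{A\times B} h(x,y)\,dxdy \;=\; \sum_{i,j=1}^{m} h_{i,j}\,|J_i|\,|J_j|\,\alpha_i\beta_j \;=:\; L(\alpha,\beta),
\]
so $L$ is bilinear in $(\alpha,\beta)\in [0,1]^m\times [0,1]^m$. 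Conversely every $(\alpha,\beta)\in[0,1]^m\times[0,1]^m$ is realized by some measurable $A\cap J_i\subset J_i$ and $B\cap J_j\subset J_j$, so
\[
d_\square(f,g)\;=\;\sup_{\alpha,\beta\in[0,1]^m}\bigl|L(\alpha,\beta)\bigr|.
\]

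Next I would use the vertex principle for linear functions on cubes. For each fixed $\beta$, the map $\alpha\mapsto L(\alpha,\beta)$ is linear on the compact convex polytope $[0,1]^m$; hence both its supremum and its infimum are attained at vertices of $[0,1]^m$, and so is the supremum of $|L(\cdot,\beta)|$. In other words, there exists a $\{0,1\}$-vector $\alpha^\star(\beta)$ with
\[
\bigl|L(\alpha^\star(\beta),\beta)\bigr|\;\ge\;\bigl|L(\alpha,\beta)\bigr|\quad\text{for all }\alpha\in[0,1]^m.
\]
Taking the supremum over $\beta$ and applying the same argument in the second variable (now with $\alpha$ fixed at a $\{0,1\}$-vertex), I conclude
\[
d_\square(f,g)\;=\;\sup_{\alpha,\beta\in\{0,1\}^m}\bigl|L(\alpha,\beta)\bigr|.
\]

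Finally, a $\{0,1\}$-valued choice of $\alpha$ corresponds to $A=\bigcup_{i\in\cN}J_i$ for $\cN:=\{i:\alpha_i=1\}$, and similarly for $\beta$, so these are precisely the sets allowed in the statement of the lemma. There is no real obstacle here; the only minor subtlety is the absolute value (which is why one maximizes both $L$ and $-L$), handled automatically by the vertex principle. This completes the proof sketch.
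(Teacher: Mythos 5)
Your proposal is correct and follows essentially the same route as the paper: rewrite $\int_{A\times B}(f-g)\,dxdy$ as a bilinear form in the fractions $a_i=|A\cap J_i|/|J_i|$, $b_j=|B\cap J_j|/|J_j|$ and observe that the supremum of its absolute value over $[0,1]^m\times[0,1]^m$ is attained at $\{0,1\}$-vertices. The only difference is that you spell out the vertex-principle step (optimizing one variable at a time) which the paper dismisses as ``now clear''; no further comment is needed.
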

\begin{proof}
First note that\begin{align*}
\int_{A\times B} (f(x,y)-g(x,y))dxdy&=\sum_{i,j}\int_{A\cap J_i\times B\cap J_j} (f(x,y)-g(x,y))dxdy\\
&=\sum_{i,j}|A\cap J_i| |B\cap J_j| (f_{i,j}-g_{i,j})\\
&=\sum_{i,j}a_ib_j |J_i| | J_j|(f_{i,j}-g_{i,j})
\end{align*}
where $a_i=\frac{|A\cap J_i|}{|J_i|}$ and $b_i=\frac{|B\cap J_i|}{|J_i|}$. It is now clear that the supremum of the absolute value is achieved when each $a_i$  and $b_j$ is either $0$ or $1$.
\end{proof}
\begin{remark}
If $f$ and $g$ are defined in terms of two different partitions of  $[0,1]$,  they can both be viewed as defined with respect to the finite partition which is their common refinement.
\end{remark}

\medskip
Since our large deviation result on $\cS$ is deduced from a large deviation result on $\cK$, we need some continuity property of the map
$k\rightarrow S(k)$ of $\cK\to\cS$.

\begin{lemma}\label{cont}
For any $\ell<\infty$,  the map $k\to S(k)$ from $\cK\to \cS$ is  continuous in the cut topology when restricted to  ${\cK}^\ell=\{k: |k(x,y)|\le \ell\}$.
\end{lemma}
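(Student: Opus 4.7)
The plan is to reduce the claim to operator-norm convergence of the integral operators $T_{k_n}$ on $L^2[0,1]$, and then invoke standard spectral perturbation theory for compact self-adjoint operators. The main quantitative ingredient is an inequality of the form
\[
\|T_{k_n} - T_k\|_{L^2\to L^2} \le C(\ell)\, d_\square(k_n,k)^{1/3}
\]
valid for $k, k_n \in \cK^\ell$. Establishing this bound is the crux of the proof.

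To prove the inequality, I would use that $T_{k_n} - T_k = T_{k_n-k}$ is self-adjoint, so its operator norm equals $\sup_{\|f\|_2=1}|\langle T_{k_n-k}f,f\rangle|$. For each unit $f\in L^2$ and each truncation level $M\ge 1$, I split $f=f_M+f^M$ with $f_M:=f\,\mathbf{1}_{\{|f|\le M\}}$ and expand the quadratic form into four pieces. The bounded--bounded piece is controlled directly by the cut norm via (\ref{cut1}): $|\int(k_n-k)f_M(x)f_M(y)\,dxdy|\le M^2\, d_\square(k_n,k)$, by taking $\phi=\psi=f_M/M$. The three remaining pieces are estimated crudely using $\|k_n-k\|_\infty\le 2\ell$ together with the Chebyshev bound $\|f^M\|_1 \le \|f\|_2^2/M = 1/M$, each contributing $O(\ell/M)$. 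Optimizing $M\sim d_\square^{-1/3}$ delivers the bound.

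Once operator-norm convergence $T_{k_n}\to T_k$ is in hand, the passage to spectra is classical. Both operators are Hilbert--Schmidt (hence compact) and self-adjoint with $\|T_k\|_{op}\le\ell$. The Courant--Fischer min-max principle then gives that the $j$-th largest positive eigenvalue and the $j$-th most negative eigenvalue (padded with zeros if fewer than $j$ exist) are each $1$-Lipschitz functions of the operator in operator norm. Thus each coordinate $u_j^{\pm}$ in the enumeration defining the metric on $\cS$ converges. Since every coordinate is bounded by $\ell$, the $j$-th term in the series defining $d(S(k_n),S(k))$ is dominated by $2\ell\cdot 2^{-(j+1)}$, which is summable, and dominated convergence yields $d(S(k_n),S(k))\to 0$.

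The main obstacle is the first step, the quantitative passage from cut norm to operator norm. The uniform bound $|k|,|k_n|\le\ell$ is essential there: without boundedness, cut-norm convergence simply does not imply operator-norm convergence (this is precisely why the lemma restricts to $\cK^\ell$, and why the outer proof of Theorem~\ref{main} needs the truncation together with the super-exponential Hilbert--Schmidt tail bound provided by \eqref{eqbound}). Boundedness is what allows us to truncate $L^2$ functions in $L^\infty$ while keeping the discarded tail under control through the $L^\infty$ bound on $k_n-k$.
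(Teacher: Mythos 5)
Your proposal is correct, but it follows a genuinely different route from the paper's. The paper characterizes the topology on $\cS$ via test functions $f$ that are bounded, continuous, and vanish near $0$; it approximates $\lambda^{-3}f(\lambda)$ uniformly on $[-\ell,\ell]$ by a polynomial, reduces to continuity of the moment maps $k\mapsto \int k(x_1,x_2)\cdots k(x_m,x_1)\,dx_1\cdots dx_m=\sum_{\lambda\in S(k)}\lambda^m$ for $m\ge 3$, and proves each is $d_\square$-continuous on $\cK^\ell$ by replacing $k_n$ by $k$ one factor at a time (the remaining factors give the bounded $\phi,\psi$ needed to apply \eqref{cut1}). Your route goes through the operator norm instead: for unit $f\in L^2$ you split $f=f_M+f^M$, bound the $f_M$-$f_M$ block by $M^2 d_\square$ via \eqref{cut1} with $\phi=\psi=f_M/M$, bound the tail blocks by $O(\ell/M)$ using $|k_n-k|\le 2\ell$ together with $\|f^M\|_1\le 1/M$, and optimize $M$ to get $\|T_{k_n}-T_k\|_{op}\le C(\ell)\,d_\square(k_n,k)^{1/3}$; Weyl's inequality $|\lambda_j(A)-\lambda_j(B)|\le\|A-B\|$ then gives $d(S(k_n),S(k))\le\|T_{k_n}-T_k\|_{op}\to 0$. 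This is precisely the operator-norm/eigenvalue Lipschitz bound the paper itself invokes later in Section 5 for the truncation step, so your approach is internally consistent with the rest of the argument, and it buys an explicit H\"older-$1/3$ modulus of continuity that the paper's trace argument does not make visible. The paper's version is tied more directly to the test-function description of the topology on $\cS$; both hinge essentially on the uniform bound $|k|\le\ell$, which in your version is what controls the three tail blocks and in the paper's is what allows uniform polynomial approximation on a fixed interval and bounds the $m$-fold integrals.
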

\begin{proof}
If $f(\lambda)=0$ near $0$ then $\lambda^{-3}f(\lambda)$ can be approximated uniformly on $[-\ell,\ell]$ by a polynomial in $\lambda$. Therefore $f(\lambda)$ is approximated by a polynomial involving only powers $\lambda^m$ for $m\ge 3$.   Since we have a bound on $\sum_{\lambda\in S}|\lambda|^2$ it is enough to show that 
$$
k\to \sum_{\lambda \in S(k)}\lambda^m
$$
are continuous maps of $\cK^\ell\to \cS$ for each $m\ge 3$. It is elementary to check that since $|k(x,y)|\le \ell $, the maps
$$
k\to \int_{[0,1]^m} k(x_1,x_2)\cdots k(x_{m-1},x_m)k(x_m,x_1) dx_1\ldots dx_m=\sum_{\lambda\in S(k)}\lambda^m
$$
are continuous maps in the cut topology from $\cK^\ell\to\cS$ provided $m\ge 3$. We start with
$$
 \int_{[0,1]^m} k_n(x_1,x_2)\cdots k_n(x_{m-1},x_m)k_n(x_m,x_1) dx_1\ldots dx_m
 $$
 which can be written as 
 $$
 \int_{[0,1]^{m-2}}\int dx_3\ldots dx_m\int_{[0,1]^2} k_n(x_1,x_2)\phi_n(x_1,x_3,\ldots,x_n)\psi_n(x_2,x_3,\ldots,x_n)dx_1dx_2
$$
where $\phi_n$ and $\psi_n$ are uniformly bounded. If $d_\square(k_n,k)\to 0$, we  can then replace $k_n(x_1,x_2)$ by $k(x_1,x_2)$. This is repeated for each factor.
\end{proof}
   
We will also need the following  lemmas: a multicolor version of Szemer\'{e}di's regularity lemma for graphs that can be found in \cite{ks96} and  its consequence.  

\begin{lemma}\label{KS}
Given any $\epsilon >0$ and  integers $r$ and  $m$, there exists $M$  and $n_0$ such that if the edges of a graph ${\cG}_n$ of size $n\ge n_0$  are colored with any one of $r$ colors, then the vertex set can be partitioned into sets  $V_0, \ldots, V_p$ for some $p$ in the range $m\le p\le M $ so that $|V_0|\le \epsilon n$, $|V_1|=|V_2|=\cdots=|V_p|=K$ and all but at most $\epsilon p^2$ pairs $(V_i,V_j)$ satisfy the following regularity condition. For any $X\subset V_i, Y\subset V_j$ with $i,j\ge 1$ and  $|X|,|Y|\ge \epsilon K$ we have
$$
|d_\nu(X,Y)-d_\nu(V_i,V_j)|<\epsilon
$$
where $d_\nu(X,Y)$ is the proportion of edges between $X$ and $Y$ that are colored with color $\nu$.
\end{lemma}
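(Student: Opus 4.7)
The plan is to imitate Szemer\'edi's original proof of the (uncolored) regularity lemma via an energy/index-increment argument, but with the energy summed over the $r$ colors. For an equitable partition $\cP = (V_0, V_1, \ldots, V_p)$ with $|V_0| \le \epsilon n$ and $|V_1| = \cdots = |V_p| = K$, and for each color $\nu \in \{1, \ldots, r\}$, define
$$
q_\nu(\cP) = \frac{1}{p^2}\sum_{i,j=1}^{p} d_\nu(V_i, V_j)^2, \qquad Q(\cP) = \sum_{\nu=1}^r q_\nu(\cP).
$$
Since the color classes partition the edges and each $d_\nu \in [0,1]$, we have $0 \le Q(\cP) \le r$.

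The core step is a refinement claim: if $\cP$ fails the desired conclusion, i.e.\ there are more than $\epsilon p^2$ pairs $(V_i, V_j)$ with $i, j \ge 1$ for which at least one color $\nu$ admits witnessing subsets $X \subset V_i$, $Y \subset V_j$ of cardinality $\ge \epsilon K$ with $|d_\nu(X, Y) - d_\nu(V_i, V_j)| \ge \epsilon$, then there exists an equitable refinement $\cP'$ of $\cP$ with at most $p \cdot 2^{pr}$ nonexceptional classes satisfying $Q(\cP') \ge Q(\cP) + \delta$ for some $\delta = \delta(\epsilon, r) > 0$. The construction is standard: for each offending triple $(V_i, V_j, \nu)$, let $A_{ij\nu} \subset V_i$, $B_{ij\nu} \subset V_j$ witness the failure, then split each $V_i$ into atoms generated by all such witnessing sets that meet it. A color-by-color Cauchy--Schwarz defect inequality shows that each irregular pair contributes at least $\epsilon^4 / p^2$ to the rise of the relevant $q_\nu$, and summing over the more than $\epsilon p^2$ irregular pairs yields $\delta \gtrsim \epsilon^5$.

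Starting from the trivial equitable partition into $m$ equal classes (pushing the residue into $V_0$), one iterates the refinement step. Since $Q(\cP) \le r$ throughout, the process terminates after at most $r/\delta$ iterations at a partition satisfying the multicolor regularity condition. The upper bound $M = M(\epsilon, r, m)$ emerges as a tower of exponentials of height $O(r/\delta)$, and $n_0$ is chosen large enough that every iterate admits the equitable refinement while keeping $|V_0| \le \epsilon n$ (routine bookkeeping of leftovers).

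The only nontrivial point is the refinement step, specifically ensuring that the energy increment $\delta$ is bounded below uniformly in the partition size $p$; this is the role of the Cauchy--Schwarz defect inequality, applied color by color to each irregular pair and then summed. A less efficient but equally valid alternative is to apply the single-color Szemer\'edi lemma sequentially to the $r$ color classes, taking common refinements at each stage: the part-count blows up by a tower of height $r$, but the qualitative statement of the lemma follows immediately.
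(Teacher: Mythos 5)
The paper does not prove Lemma~\ref{KS}; it is quoted as a known result, the multicolour form of Szemer\'edi's regularity lemma, with reference to Koml\'os and Simonovits \cite{ks96}. So there is no ``paper's proof'' to compare against, and the question is simply whether your sketch is sound.

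Your primary argument is the standard one and is correct in outline: define the total energy $Q(\cP)=\sum_{\nu=1}^r q_\nu(\cP)$, observe it is bounded (in fact $\sum_\nu d_\nu(V_i,V_j)=1$ gives $Q\le 1$, though $Q\le r$ suffices), show it is non-decreasing under refinement by Cauchy--Schwarz, and show that if more than $\epsilon p^2$ pairs are irregular for \emph{some} colour then a common refinement by the witnessing sets (at most $pr$ of them inside each $V_i$, hence at most $2^{pr}$ atoms per class) raises $Q$ by $\gtrsim\epsilon^5$, whence the process stops after $O(\epsilon^{-5})$ rounds with a tower-type bound $M$. The bookkeeping for equitablisation and $V_0$, and the need for $n\ge n_0$ so the residue can be absorbed, are exactly as in the uncoloured case.

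The ``less efficient but equally valid alternative'' of applying the single-colour lemma sequentially to the $r$ colours and taking common refinements is \emph{not} immediate, and as stated it has a real gap. Regularity of a pair $(V_i,V_j)$ for colour $1$ is not inherited by the sub-pairs $(W_a,W_b)$ produced when you refine for colour $2$: the sub-pair lemma only gives that $(W_a,W_b)$ is $\max(2\epsilon_1,\epsilon_1/\gamma)$-regular when $|W_a|\ge\gamma|V_i|$, $|W_b|\ge\gamma|V_j|$, and here $\gamma\approx p_1/p_2$ where $p_2$ is governed by a tower function of $1/\epsilon_2$ and $p_1$. To keep the colour-$1$ parameter acceptable one would need $\epsilon_1\lesssim\epsilon\,p_1/p_2$, but $p_1$ itself blows up as $\epsilon_1\to 0$, so the parameters cannot be chosen consistently in that order. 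One is forced back to the simultaneous energy argument (or some other device); the ``qualitative statement follows immediately'' claim should be dropped. Since this is offered only as an alternative, it does not invalidate your main proof, but it should not be presented as obvious.
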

\begin{lemma}\label{KS2}
For any $\ell<\infty$ and  $\epsilon>0$, there is a compact set of simple functions $W_{\ell, \epsilon} \subset\cK$ and $n_0(\epsilon, \ell)$ such that for $n\ge n_0$ and any $k\in \cK^\ell_n$, there exists $f\in W_{\ell, \epsilon}$ and $\sigma\in\Pi(n)$ such that
$$
d_\square(\sigma k,f)<\epsilon.
$$
\end{lemma}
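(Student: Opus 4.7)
The strategy is to discretize the range of $k$ into finitely many colors, apply the multicolor regularity lemma of Lemma~\ref{KS}, and then construct $f$ as a block-constant approximation using the resulting color densities. Fix $\delta := \epsilon/8$ and cover $[-\ell,\ell]$ by $r := \lceil 2\ell/\delta\rceil$ intervals $I_1,\ldots,I_r$ of length at most $\delta$ with centers $c_1,\ldots,c_r$. Choose $\epsilon' := \epsilon^2/(64\ell^2)$ and $m := \lceil 1/\epsilon'\rceil$, and invoke Lemma~\ref{KS} with parameters $\epsilon',r,m$ to obtain constants $M$ and $n_0$ depending only on $\epsilon,\ell$. These will play the role of the $n_0(\epsilon,\ell)$ in the statement and will control the complexity of $W_{\ell,\epsilon}$.

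Given $k\in\cK^\ell_n$ with $n\ge n_0$, represented by its symmetric matrix $\{x_{a,b}\}$ via (\ref{def:k}), color the edge $\{a,b\}$ of the complete graph on $[n]$ by the index $\nu(a,b)$ with $x_{a,b}\in I_{\nu(a,b)}$. Lemma~\ref{KS} produces a partition $[n]=V_0\sqcup V_1\sqcup\cdots\sqcup V_p$ with $m\le p\le M$, $|V_0|\le\epsilon' n$, $|V_1|=\cdots=|V_p|=K$, and at most $\epsilon' p^2$ irregular pairs $(V_i,V_j)$ with $i,j\ge 1$. Let $\sigma\in\Pi(n)$ be any permutation placing the elements of $V_1,V_2,\ldots,V_p,V_0$ in consecutive order, inducing on $[0,1]$ blocks $U_1,\ldots,U_p,U_0$ with $|U_i|=K/n$ for $i\ge 1$ and $|U_0|\le\epsilon'$. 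Define $f\in\cK$ by $f := \sum_{\nu=1}^r c_\nu d_\nu(V_i,V_j)$ on $U_i\times U_j$ for $i,j\ge 1$, and $f:=0$ on blocks touching $U_0$. Symmetry of $\{x_{a,b}\}$ makes $d_\nu(V_i,V_j)=d_\nu(V_j,V_i)$, so $f$ is symmetric and $|f|\le\ell$.

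Since $\sigma k$ and $f$ are both simple with respect to $\{J^n_a\}_{a=1}^n$, Lemma~\ref{partition} attains the supremum in $d_\square(\sigma k,f)$ at sets $A=\bigcup_{a\in\cA}J^n_a$ and $B=\bigcup_{b\in\cB}J^n_b$. Split $\cA=\bigsqcup_i\cA_i$ and $\cB=\bigsqcup_j\cB_j$ along $\sigma(V_i),\sigma(V_j)$, and set $X_i:=\sigma^{-1}(\cA_i)\subset V_i$, $Y_j:=\sigma^{-1}(\cB_j)\subset V_j$. Blocks touching $V_0$ contribute in total at most $3\ell|U_0|\le 3\ell\epsilon'$; irregular blocks with $i,j\ge 1$ contribute at most $2\ell(\epsilon' p^2)(K/n)^2\le 2\ell\epsilon'$; indices with $|X_i|<\epsilon' K$ or $|Y_j|<\epsilon' K$ contribute at most $4\ell\epsilon'$ in total. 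For a regular pair with $|X_i|,|Y_j|\ge\epsilon' K$, grouping $x_{a,b}$ by color and using $|x_{a,b}-c_\nu|\le\delta$ on color $\nu$ together with $|d_\nu(X_i,Y_j)-d_\nu(V_i,V_j)|<\epsilon'$ yields
$$
\Bigl|\sum_{a\in X_i,b\in Y_j}x_{a,b}-\Bigl(\sum_\nu c_\nu d_\nu(V_i,V_j)\Bigr)|X_i||Y_j|\Bigr|\le(r\ell\epsilon'+\delta)|X_i||Y_j|.
$$
Summing $|X_i||Y_j|/n^2\le 1$ gives a regular contribution of at most $r\ell\epsilon'+\delta$; altogether $d_\square(\sigma k,f)\le(9\ell+r\ell)\epsilon'+\delta<\epsilon$ by the parameter choice.

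Finally, define $W_{\ell,\epsilon}$ to be the set of all symmetric simple functions on $D$ built from a partition of $[0,1]$ into $p+1$ intervals with $m\le p\le M$, one of length $t\in[0,\epsilon']$ and the remaining $p$ of equal length $(1-t)/p$, with all coefficients in $[-\ell,\ell]$. For each fixed $p$, this set is the continuous image, in the $L^1$-topology and hence in the (weaker) cut metric, of the compact parameter space $[0,\epsilon']\times[-\ell,\ell]^{(p+1)(p+2)/2}$; the finite union over $p\in[m,M]$ is compact and contains every $f$ produced above. The main technical point is the regular-block estimate: summing the color errors forces $\epsilon'$ to scale like $\epsilon^2/\ell^2$, which enlarges $M$ and $n_0$ but does not affect the compactness of $W_{\ell,\epsilon}$, since the latter depends only on $m$, $M$, $\ell$ and $\epsilon'$.
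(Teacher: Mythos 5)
Your proof follows essentially the same route as the paper: discretize the entries into $r$ value-classes, apply the multicolor regularity lemma (Lemma~\ref{KS}), permute so the classes become consecutive blocks, take the block-constant kernel built from the color densities, and bound $d_\square$ by splitting index pairs into the $V_0$, irregular, small-intersection and regular groups, finishing with compactness of the family of simple kernels with boundedly many blocks and coefficients in $[-\ell,\ell]$. The only (minor) difference is that the paper also isolates the diagonal pairs $(V_i,V_i)$, which the regularity lemma does not control, into a separate group $F_0$ whose total area is at most $1/p\le \epsilon'$; inserting that one-line bound into your estimate costs an extra $2\ell\epsilon'$ and leaves your conclusion unchanged.
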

\begin{proof}
Let $\epsilon>0$ be given. Let the integer $r$ be chosen such that $\frac{\ell}{r}<\frac{\epsilon}{2}$. Let $\epsilon'=\frac{\epsilon}{12r\ell}$ and $m = \frac{1}{\epsilon'}$. Let $x_{i,j}$ be the value of $k$ in the rectangle $J_i^n \times J_j^n$. 
The interval $[-\ell, \ell]$ is divided in to $r$ disjoint equal intervals of length $\frac{2\ell}{r}$ and the edge $(i,j)$ is colored according to the interval into which $x_{i,j}$ falls. The \lq color\rq\, is defined  as the value of the mid point of the interval. The color of 
the edge $(i,j)$ is then a real number  $y_{i,j}$ which can equal  any one from the finite set $z_1, z_2, \ldots, z_r$ and   $|y_{i,j}-x_{i,j}|\le \frac{\epsilon}{2}$.  We apply the multicolor version of Szemer\'{e}di's regularity theorem (Lemma \ref{KS}) with parameters $(\epsilon', m, r)$ to obtain a partition $V_0,V_1, \dots V_p$  of $\{1,2,\ldots, n\}$ with the following properties. $|V_0|=K' \le \epsilon'\,n$. $|V_1|=|V_2|=\cdots=|V_p|=K$. If  $a_{s,i,j}$ be the proportion of edges between  $V_i$ and $V_j$ that have color $z_s$, then for all but $\epsilon' p^2$ pairs $(V_i,V_j)$, $i, j\ge 1$ $i\ne j$, for any two  subsets $X\subset V_i$, $Y\subset V_j$ with $|X|\ge \epsilon' K, |Y|\ge \epsilon' K$ the proportion $b_{s,i,j}$  of edges of color $z_s$,  between $X$ and $Y$ satisfies 
$$
\sup_s |b_{s,i,j}-a_{s,i,j}|\le \epsilon '
$$
Let us divide the unit interval into subintervals $J_0,\ldots, J_p$ where $J_0=[0,\frac{K'}{n}]$ and for $i\ge 1$, $J_i=[\frac{K'+(i-1)K}{n}, \frac{K'+iK}{n}]$. We construct a function $f\in \cK^\ell$ as
$$
f(x,y)=\sum_{s=1}^r  z_s f(s,x,y) 
$$
where
$$
f(s,x,y)= \sum_{i,j=0}^p    a_{s,i,j} {\bf 1}_{J_i}(x) {\bf 1}_{J_j}(y).
$$
For fixed $\ell$ and $\epsilon$ as long as $p$ remains bounded such functions vary over a  compact subset of $L^1(D)$.

\medskip
For the permutation $\sigma\in\Pi(n)$ of the vertices, we rearrange the order of the vertices, so that those  in $V_0$ corresponds to the first $K'$ indices and for $1\le i\le p$, those in $V_i$  correspond respectively to  indices in the range   $(K'+(i-1)K+1, K'+iK)$. We will denote by $k'$ the image of $\{x_{\sigma(i),\sigma(j)}\}$ in $\cK^\ell_n$. We define $c(s, i,j)=1$ if $x_{\sigma(i),\sigma(j)}$ belongs to the interval with mid point $z_s$. Otherwise it is $0$. With
$$
k'(s,x,y)=\sum_{i,j=1}^n c(s,i,j) J^n_i(x)J^n_j(y)
$$
and
$$
k'(x,y)=\sum_{i,j=1}^n x_{\sigma(i),\sigma(j)} J^n_i(x)J^n_j(y)
$$
we have that for each $x,y$, 
$$
\biggl|k'(x,y)-\sum_s z_s k'(s,x,y)\biggr|\le \frac{\epsilon}{2}.
$$
We need to estimate
\begin{align*}
\biggl|\int_{A\times B}  [k'(x,y)-f(x,y)]dxdy\biggr|\le \frac{\epsilon}{2}+ r\ell \sup_{1\le s\le r}\biggl|\int_{A\times B}  [k'(s,x,y)-f(s,x,y)]dxdy\biggr|.
\end{align*}
For each value of $s$, we will estimate
$$
\int_{A\times B} [ k'(s,x,y)-f(s,x,y)]dxdy=\sum_{i,j=0}^p \int_{A\cap J_i\times B\cap J_j}  [k'(s,x,y)-f(s,x,y)]dxdy.
$$
The summation over $(i,j)$ will be split into several groups. $F_0 = \{(i,i): i\ge 1\}$. $F_1=\{(i,j): i=0\}\cup \{(i,j): j=0\}$. $F_2=\{(i,j)\}$  is  the collection of   at most $\epsilon' p^2$ exceptional pairs  from Lemma \ref{KS}. $F_3=\{(i,j)\}$ for which either $|A\cap J_i|\le \epsilon' |J_i|$ or $|B\cap J_j|\le \epsilon' |J_j|$. $F_4$ will be the rest. Then: 
$$
\biggl|\sum_{(i,j)\in F_0} \int_{A\cap J_i\times B\cap J_j}  [k'(s,x,y)-f(s,x,y)]dxdy\biggr|\le \frac{pK^2}{n^2}\le \frac{1}{p} \le \epsilon'.
$$
$$
\biggl|\sum_{(i,j)\in F_1} \int_{A\cap J_i\times B\cap J_j}  [k'(s,x,y)-f(s,x,y)]dxdy\biggr|\le 2 \epsilon'.
$$
$$
\biggl|\sum_{(i,j)\in F_2} \int_{A\cap J_i\times B\cap J_j}  [k'(s,x,y)-f(s,x,y)]dxdy\biggr|\le \epsilon' p^2 |J_i||J_j|\le \epsilon' p^2\frac{1}{p^2}=\epsilon'.
$$
$$
\biggl|\sum_{(i,j)\in F_3} \int_{A\cap J_i\times B\cap J_j}  [k'(s,x,y)-f(s,x,y)]dxdy\biggr|\le \epsilon'\sum_{i,j}|J_i||J_j|\le\epsilon'.
$$
Finally in the remaining set $F_4$, since  $|A\cap J_i|\ge \epsilon' |J_i|$ and $|B\cap J_j|\ge \epsilon' |J_j|$,  it follows that
$$\biggl|\int_{A\cap J_i\times B\cap J_j}  [k'(s,x,y)-f(s,x,y)]dxdy\biggr|\le \epsilon' |J_i||J_j|$$
 and hence
$$
\biggl|\sum_{(i,j)\in F_4} \int_{A\cap J_i\times B\cap J_j}  [k'(s,x,y)-f(s,x,y)]dxdy\biggr|\le \epsilon'\sum_{i,j}|J_i||J_j|\le\epsilon'.
$$
Adding them up gives
$$
\biggl|\int_{A\times B}  [k'(x,y)-f(x,y)]dxdy\biggr|\le \frac{\epsilon}{2}+ 6 r\ell \epsilon' \le \epsilon
$$
\end{proof}

\section{Lowerbound} 
The lower bound for ${\tilde Q}_n$ on $\tilde \cK$ can be proved by proving a lower bound for $Q_n$ on $\cK$ and it can be done  without truncation and under the (weaker) assumption  that $\int e^{\theta \, x^2} d\mu(x)<\infty$ for some $\theta$. This implies a lower bound $h(x)\ge c|x|^2$  when $|x|$ is large.
\begin{theorem} Under the weaker assumption that for some $\theta>0$,
$$
\int e^{\theta \, x^2} d\mu(x)<\infty,
$$
it follows that for any $f\in \cK$ such that 
$$
I(f)=\frac{1}{2}\int h(f(x,y)) dxdy<\infty
$$
and for any $\delta>0$,
$$
\liminf_{n\to\infty} \frac{1}{n^2}\log Q_n[ d_\square(k, f)<\delta]\ge - I(f)
$$
\end{theorem}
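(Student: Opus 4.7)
My plan is to apply the classical Cram\'er exponential-tilting strategy at the level of individual matrix entries, after reducing to a bounded simple approximation of $f$. For any $\eta > 0$ I would first construct $f^*\in\cK^\ell\cap\cK_N$ (for some large $\ell$ and $N$) such that $I(f^*)\le I(f)+\eta$ and $d_\square(f^*,f)<\delta/2$. This is done by truncating $f$ outside $\{|f|\le\ell\}$ by the mean $\bar x$ of $\mu$ (which is the unique zero of $h$, so $I$ does not increase), then averaging the truncated function over the blocks of $\cP_N$ (Jensen again keeps $I$ non-increasing). The $L^2$ tail of $f$, which dominates the cut-metric error from truncation, is small because the weaker hypothesis forces $h(x)\ge c\,x^2$ for large $|x|$; the block-averaging converges in $L^1$ hence in cut norm by Lebesgue differentiation, and $I(f^*)\to I(f)$ as $\ell,N\to\infty$ follows from dominated convergence.

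For each value $v$ taken by the simple bounded $f^*$, let $\te_v$ be the (finite, bounded) maximizer of $\te v-\log M(\te)$, so that $h(v)=\te_v v-\log M(\te_v)$. Define the product measure $\tilde\bbP_n$ under which $x_{i,j}$ has density $e^{\te_{B(i,j)}x}/M(\te_{B(i,j)})$ relative to $\mu$, where $B(i,j)$ is the block of $\cP_N\times\cP_N$ containing $(i,j)$; the entries remain independent with $\tilde\bbE[x_{i,j}]=f^*_{B(i,j)}$. For $n$ a multiple of $N$, the log Radon--Nikodym derivative is $Z_n=\sum_{i\le j}[\te_{B(i,j)}x_{i,j}-\log M(\te_{B(i,j)})]$, and a change of variables yields
\begin{equation*}
Q_n[d_\square(k,f)<\delta]=\tilde\bbE\bigl[e^{-Z_n}\,\bbI_{\{d_\square(k,f)<\delta\}}\bigr]\ge e^{-n^2 I(f^*)-\eta n^2}\,\tilde\bbP_n(E_n),
\end{equation*}
where $E_n=\{d_\square(k,f^*)<\delta/2\}\cap\{Z_n\le n^2 I(f^*)+\eta n^2\}$. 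Now $\tilde\bbE[Z_n]=\sum_{i\le j}h(f^*_{B(i,j)})=n^2 I(f^*)+O(n)$, and because the tilted measure still has all exponential moments $\mathrm{Var}(Z_n)=O(n^2)$, so Chebyshev gives $\tilde\bbP_n(Z_n>n^2 I(f^*)+\eta n^2)\to 0$.

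The main obstacle is showing $\tilde\bbP_n(d_\square(k,f^*)\ge\delta/2)\to 0$. Since $f^*\in\cK_N\subset\cK_n$, the difference $W_n=k-f^*$ is simple on $\cP_n$ with independent mean-zero, uniformly sub-exponential entries under $\tilde\bbP_n$. By Lemma \ref{partition} applied to the common partition $\cP_n$, $d_\square(W_n,0)$ is the supremum of $\bigl|\int_{A\times B}W_n\bigr|$ over $A,B$ that are unions of $\cP_n$-intervals, indexed by $A',B'\subset\{1,\ldots,n\}$. For each fixed pair, $\int_{A\times B}W_n$ equals $n^{-2}$ times a centered sum of at most $n^2$ independent sub-exponential variables, so Bernstein's inequality gives $\tilde\bbP_n(|\int_{A\times B}W_n|>\delta/4)\le 2 e^{-c\delta^2 n^2}$ uniformly in $(A',B')$; the $4^n$-fold union bound still leaves $O(e^{-c'\delta^2 n^2})$. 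This is exactly where the $n^2$-scale of the rate function beats the $n$-dimensional entropy of the cut-norm supremum, and it is the crucial use of the Cram\'er tilt keeping the $\te_v$'s bounded. Once this concentration is in hand, letting $n\to\infty$ and then $\eta\to 0$, $\ell,N\to\infty$ completes the proof.
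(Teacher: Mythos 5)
Your argument is essentially the paper's proof: approximate $f$ by a bounded simple function whose block values match tilted means, perform a blockwise exponential tilt so that the tilted expectation of each entry equals the block value, and then beat the $2^{2n}$-fold union bound over cut-norm rectangles with an $e^{-cn^2}$ Cram\'er/Bernstein concentration estimate. Two stylistic differences are worth noting. First, the preliminary truncation at level $\ell$ is superfluous: since $I(f)<\infty$ and $h$ grows super-quadratically, $f\in L^2$, so for any fixed $q$ the block averages $g_{r,s}=q^2\int_{J_r^q\times J_s^q}f\,dxdy$ are automatically a \emph{bounded} simple function with $I(g)\le I(f)$ by Jensen and $d_\square(g,f)\to 0$ as $q\to\infty$; the paper therefore skips truncation entirely. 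Second, the paper closes the argument with the relative-entropy lower bound $\alpha(A)\ge\beta(A)\exp[-\beta(A)^{-1}(H+2e^{-1})]$ applied with $H=n^2 I(g)$, which only requires $Q_n^g(A)\to 1$; your route of bounding $e^{-Z_n}$ on a good event and separately Chebyshev-ing $Z_n$ reaches the same place but needs the extra second-moment estimate on $Z_n$.

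There is one genuine, if minor, gap. You work only with $n$ a multiple of $N$, which gives the lower bound along that subsequence but not the stated $\liminf_{n\to\infty}$. The paper sidesteps this by viewing both $k\in\cK_n$ and $g\in\cK_q$ as elements of the common refinement $\cK_{nq}$, applying Lemma~\ref{partition} there, and running the union bound over $2^{nq}\times 2^{nq}$ pairs of index sets; this works for every $n$, and the blockwise tilt is defined by assigning index $i$ to block $r$ whenever $(r-1)n<iq\le rn$ without requiring $q\mid n$. You should make the same move (or otherwise argue that the boundary blocks contribute $o(n^2)$) to get the statement as written.
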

\begin{proof}
Since $h$ is a convex function of its argument, for any integer $q$ we can replace  $f$  by a  simple function
 $g$  of its averages over $J^q_r\times J^q_s$ so that 
$$
g(x,y)=\sum_{r,s=1}^q g_{r,s}{\bf 1}_{J^q_r}(x){\bf 1}_{J^q_s}(y)
$$
where
$$
g_{r,s}=q^2\int_{J^q_r\times J^q_s} f(x,y) dxdy.
$$
For  any $q$, $I(g)\le I(f)$ and for large $q$,  $d_\square(f,g)<\frac{\delta}{2}$. It suffices to show that
$$
\liminf_{n\to\infty}\frac{1}{n^2}\log Q_n[d_\square(g,k)\le \frac{\delta}{2}]\ge -I(g).
$$
We have
$$
k(x,y)=\sum_{i,j=1}^n x_{i,j}{\bf 1}_{J^n_i}(x){\bf 1}_{J^n_j}(y)
$$
and we can view both $k$ and $g$ as members of $\cK_{nq}$.   The distance $d_\square (g,k)$ can be computed as
$$
d_\square (g,k)=\sup_{A, B}\biggl|\int_{A\times B} (g(x,y)-k(x,y))dxdy\biggr|
$$
where $A$ and $B$ are taken to be unions of sub-collections of intervals of the form $\{[\frac{i-1}{nq},\frac{i}{nq}]\}$. There are exactly $2^{nq}\times 2^{nq}$ such pairs $A,B$. We now tilt the measure so that $\{x_{i,j}\}$ remains  symmetric and $\{x_{i,j}\}$   for   $j\ge i$ are   still independent but the distribution of $x_{i,j}$ is tilted  from $\mu$ to $\mu_{i,j}$ given by
$$
\mu_{i,j}(dx)=\frac{1}{M(\theta_{r,s})}\exp[\theta_{r,s}\, x]\mu(dx)
$$ 
for $(r-1)n< iq \le rn$ and $(s-1)n< j q\le sn$ where $\theta_{r,s}= h'(g_{r,s})$, or equivalently $g_{r,s} = \frac{M'(\theta_{r,s})}{M(\theta_{r,s})}$ for 
$1\le r,s\le q$. Let $Q_n^g$ be the law of the new $k$. 
The law of large numbers applies to each such pair $A,B$ with uniform (in $A$ and $B$) exponential error bounds of  $e^{-cn^2+o(n^2)}$ for some $c>0$. Therefore,  
$$
Q^{g}_n\biggl[\biggl|\int_{A\times B} (g(x,y)-k(x,y))dxdy\biggr|\ge \frac{\delta}{2}\biggr]\le e^{-c(\delta) n^2+o(n^2)}
$$
Since $2^{2nq}\ll e^{cn^2}$, it follows that
$$
Q^{g}_n[d_\square(g,k)\ge \frac{\delta}{2}]\to 0
$$

\medskip
The relative entropy of $Q^{g}_n$ with respect to $Q_n$ is easily computed to be  $n^2I(g)$.

\medskip
The following entropy lower bound using Jensen's inequality, establishes the large deviation lower bound. Suppose $\alpha$, $\beta$ are two probability measures and $\beta \ll \alpha $ with $H=\int \phi\log\phi \,d\alpha<\infty$ where $\phi=\frac{d\beta}{d\alpha}$. Since $y\log y \ge -e^{-1}$ for all $y\ge 0$, $\int |\phi\log\phi| d\alpha\le H+2e^{-1}$. Therefore
\begin{align*}
\alpha(A)&\ge  \int_A \phi^{-1} d\beta=\int_A \exp[-\log \phi]d\beta\\
&=\beta(A)\frac{1}{\beta(A)}\int_A \exp[-\log \phi ]d\beta\\
&\ge \beta(A)\exp\biggl[-\frac{1}{\beta(A)}\int_A \phi \log \phi d\alpha\biggr]\\
&\ge \beta(A)\exp\biggl[-\frac{1}{\beta(A)}\int |\phi\log \phi|d\alpha\biggr]\\
&\ge \beta(A)\exp\biggl[-\frac{1}{\beta(A)}[H+2e^{-1}]\biggr]
\end{align*}
Taking $\alpha=Q_n$ and $\beta=Q^g_n$ and $A=\{k: d_\square(k,g)\le \frac{\delta}{2}\}$ we have $H=n^2I(g)$ and $\beta(A)\simeq 1$.
\end{proof}

\section{Upperbound}
We assume that $|x_{i,j}|\le \ell$. According to a result of Lov\'{a}sz and Szegedy \cite{lovaszszegedy07},  for any finite $\ell$,  the set $\tilde\cK^\ell =\{{\tilde f}: \sup_{x,y}|f(x,y)|\le \ell\}$ is compact in $\tilde K$.
Therefore in order to prove the large deviation upper bound for ${\tilde Q}_n$ with rate function $I(f)$  it is sufficient to prove the local version of the upper bound.

\begin{theorem}
Let $\tilde f\in{\tilde \cK}$. Then
$$
\limsup_{\epsilon\to 0}\limsup_{n\to\infty}\frac{1}{n^2}\log {\tilde Q}_n[{\tilde k}: d_\square({\tilde k}, {\tilde f})<\epsilon]\le -I({\tilde f})
$$
\end{theorem}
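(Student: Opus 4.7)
The strategy is standard for such large-deviation upper bounds: (i) discretize the cut-ball around $\tilde f$ into a bounded number of simple-function balls via Lemma~\ref{KS2}; (ii) on each such ball apply Cram\'er's theorem blockwise, exploiting independence of entries; (iii) pass to the limit using lower semi-continuity of $I$ on $\tilde\cK^\ell$ in the cut topology. Fix $\tilde f\in\tilde\cK^\ell$ and work with a bounded representative $f$ with $\|f\|_\infty\le\ell$.

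\medskip

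\emph{Discretization.} By Lemma~\ref{KS2} there is a compact set $W_{\ell,\epsilon}\subset\cK^\ell$ of block functions refining some fixed partition $\cP_Q$, with $Q=Q(\ell,\epsilon)$, such that every $k\in\cK_n^\ell$ satisfies $d_\square(\sigma k,g)<\epsilon$ for some $\sigma\in\Pi(n)$ and $g\in W_{\ell,\epsilon}$. Cover $W_{\ell,\epsilon}$ by finitely many cut-balls of radius $\epsilon$ around $g_1,\dots,g_N$, with $N=N(\ell,\epsilon)$ independent of $n$. Then
\[
\{d_\square(\tilde k,\tilde f)<\epsilon\}\ \subseteq\ \bigcup_{\sigma\in\Pi(n)}\bigcup_{j\in J_\epsilon}\{d_\square(\sigma k,g_j)<2\epsilon\},
\]
where $J_\epsilon=\{j:d_\square(\tilde g_j,\tilde f)<3\epsilon\}$ has size $\le N$.

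\medskip

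\emph{Block Cram\'er.} Viewing $g=g_j$ as constant $g_{r,s}$ on each $\cP_Q$-rectangle and applying Lemma~\ref{partition} on the common refinement with $A=J^Q_r$, $B=J^Q_s$, the event $d_\square(\sigma k,g)<2\epsilon$ forces every block mean
\[
Y^\sigma_{r,s}:=\frac{Q^2}{n^2}\sum_{\substack{i:\,\sigma(i)\in nJ^Q_r\\ j:\,\sigma(j)\in nJ^Q_s}} x_{i,j}
\]
to lie within $2\epsilon Q^2$ of $g_{r,s}$. For fixed $\sigma$, the $Y^\sigma_{r,s}$ with $r\le s$ are empirical means of essentially independent $\mu$-samples ($\sim n^2/Q^2$ for $r<s$, $\sim n^2/(2Q^2)$ for $r=s$ after accounting for symmetry; the $n$ diagonal entries are negligible). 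Cram\'er's theorem and cross-block independence yield
\[
\bbP\Bigl[\bigcap_{r\le s}\{|Y^\sigma_{r,s}-g_{r,s}|<2\epsilon Q^2\}\Bigr]\ \le\ \exp\bigl(-n^2 I(g)+n^2\eta(\epsilon)+o(n^2)\bigr),
\]
where $\eta(\epsilon)\to 0$ as $\epsilon\to 0$ by uniform continuity of $h$ on $[-\ell,\ell]$. The factor $\tfrac12$ in the definition of $I$ absorbs exactly the symmetry-doubling of off-diagonal entries. Union-bounding over $\sigma$ costs $n!=e^{o(n^2)}$ and over $J_\epsilon$ costs a constant, so
\[
\tfrac{1}{n^2}\log\tilde Q_n[d_\square(\tilde k,\tilde f)<\epsilon]\ \le\ -\min_{j\in J_\epsilon} I(g_j)+\eta(\epsilon)+o(1).
\]

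\medskip

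\emph{Closing by lower semi-continuity.} Extracting $g^{(\epsilon)}\in W_{\ell,\epsilon}$ realizing the minimum, $\tilde g^{(\epsilon)}\to\tilde f$ in cut topology; after choosing $\sigma^{(\epsilon)}$ with $d_\square(\sigma^{(\epsilon)}g^{(\epsilon)},f)\to 0$ and replacing $g^{(\epsilon)}$ by $\sigma^{(\epsilon)}g^{(\epsilon)}$ (which preserves $I$), it suffices to show $\liminf I(g^{(\epsilon)})\ge I(f)$. For any fixed $q$, block-averaging on $\cP_q$ gives $I(g^{(\epsilon),(q)})\le I(g^{(\epsilon)})$ by Jensen; cut convergence combined with the $L^\infty$ bound $\ell$ forces $L^1$-convergence of block averages on the fixed partition $\cP_q$, so $I(g^{(\epsilon),(q)})\to I(f^{(q)})$ by continuity of $h$ on $[-\ell,\ell]$. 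Finally $I(f^{(q)})\to I(f)$ as $q\to\infty$ by martingale convergence and Fatou, giving $\liminf I(g^{(\epsilon)})\ge I(f)$ and completing the proof.

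\medskip

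\textbf{Main obstacle.} The most delicate point is the bridge in the \emph{Block Cram\'er} step between the single topological inequality $d_\square(\sigma k,g)<2\epsilon$ and a clean product event on independent block averages, so that Cram\'er applied on each block aggregates to exactly $I(g)$. One must juggle the common refinement of $\cP_Q$ and $\cP_n$, the symmetry bookkeeping on diagonal blocks, and the slack $2\epsilon Q^2$ (which must shrink slowly enough for Cram\'er rates to concentrate at $h(g_{r,s})$ as $\epsilon\to 0$ with $Q=Q(\ell,\epsilon)$ fixed). The lower semi-continuity step is a close second and essentially requires the $L^\infty$ truncation to the set $\cK^\ell$.
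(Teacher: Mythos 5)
There is a genuine gap at the heart of your \emph{Block Cram\'er} step, and it is exactly the point you flag as the ``main obstacle'' without resolving it. The per-block tolerance you extract from $d_\square(\sigma k,g_j)<2\epsilon$ is $2\epsilon Q^2$, where $Q=Q(\ell,\epsilon)$ is the number of classes produced by the regularity lemma (Lemma~\ref{KS2}). You cannot treat $Q$ as ``fixed'' while $\epsilon\to0$: the regularity lemma forces $Q(\ell,\epsilon)\to\infty$ (tower-type growth) as $\epsilon\to0$, so $2\epsilon Q(\ell,\epsilon)^2$ does not shrink --- it diverges. Consequently the blockwise Cram\'er upper bound gives rate $\frac{1}{2Q^2}\sum_{r,s}\inf\{h(x):|x-g_{r,s}|\le 2\epsilon Q^2\}$, and once $2\epsilon Q^2$ exceeds a constant this infimum is $0$ for every block (the window contains the mean of $\mu$, where $h$ vanishes). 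So $\eta(\epsilon)$ does not tend to $0$ ``by uniform continuity of $h$''; the estimate becomes vacuous, and no amount of care in the lower-semicontinuity step (which is otherwise broadly reasonable) can recover the bound, because the probability being estimated is close to $1$. Nor can you escape by fixing a partition $\cP_q$ independent of $\epsilon$ for the Cram\'er step: then the unknown rearrangement $\sigma'$ relating $k$ to $f$ enters, and Jensen's inequality goes the wrong way (block averaging only \emph{decreases} $I$), so you would need a uniform-in-$\sigma'$ statement that is essentially the theorem itself.

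The paper avoids this by decoupling the two uses you merged. The Cram\'er-type estimate is not done block by block but by tilting/Chernoff with a \emph{fixed dual test function} $g$ chosen from the variational formula for $I(f)$ (within $\delta$): the cut-ball $B(f,\epsilon)$ is contained in the half-space $H_{f,g,\epsilon}=\{k:\langle k,g\rangle\ge\inf_{k'\in B(f,\epsilon)}\langle k',g\rangle\}$, the exponential Chebyshev inequality bounds $Q_n[H_{f,g,\epsilon}]$, and since $g$ is fixed before $\epsilon\to0$ one has $\inf_{k\in B(f,\epsilon)}\langle k,g\rangle\to\langle f,g\rangle$, so no tolerance blows up; this yields the local bound \eqref{wub} on $\cK$. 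The regularity lemma is used \emph{only} to discretize the orbit under the group: Lemma~\ref{KS2} covers $\cK_n^\ell$ by $n!$ permuted copies of finitely many balls, $n!=e^{o(n^2)}$ is harmless, $Q_n$ is $\Pi(n)$-invariant, and the nonempty intersections $B(g,2\epsilon)\cap B(\sigma f,\epsilon)$ are absorbed into $B(\sigma' f,5\epsilon)$, after which \eqref{wub}, the $G$-invariance of $I$, and lower semicontinuity finish the argument. If you restructure your proof so that the blockwise/entrywise Cram\'er analysis is replaced by this fixed-test-function Chernoff bound, and keep Lemma~\ref{KS2} purely for the covering of the $G$-orbit, the rest of your outline goes through.
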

\begin{proof}
The theorem  is proved in several steps.  Closed balls $B(f,\epsilon)$ in $\cK$ of the form $\{k: d_\square(k,f)\le \epsilon\}$ are weakly closed. It is not hard to prove that
\begin{equation}\label{wub}
\limsup_{\epsilon\to 0}\limsup_{n\to\infty}\frac{1}{n^2}\log Q_n[ k: d_\square( k, f)\le \epsilon]\le -I(f).
\end{equation}
The argument goes as follows. Assume $I(f)<\infty$. Given $\delta>0$ pick a nice $g$ such that
$$
\frac{1}{2}\bigg[\int f(x,y) g(x,y)dxdy-\int  \log \biggl[\int e^{z\,g(x,y)}\mu(dz)\biggr] dx dy\bigg] \ge I(f)-\delta
$$
and apply Cram\'{e}r type estimate using the moment generating function for the half space
$$
H_{f,g,\epsilon}=\bigg\{ k: \langle k, g\rangle \ge \inf_{k'\in B(f,\epsilon)}\langle k', g\rangle\bigg\}
$$
which contains $B(f,\epsilon)$. This gives 
$$
Q_n[B(f,\epsilon)]\le  Q_n[H_{f,g,\epsilon}]\le \exp\biggl[-\frac{n^2}{2} \inf_{k\in B(f,\epsilon)}\langle k, g\rangle\biggr] \bbE^{Q_n}\exp\biggl[\frac{n^2}{2}\langle k, g\rangle\biggr]
$$
It is easy to see (because $x_{i,j}=x_{j,i}$), that
\begin{align*}
\limsup_{n\to\infty}\frac{1}{n^2}\log \bbE^{Q_n}\exp\biggl[\frac{n^2}{2}\langle k, g\rangle\biggr]=\frac{1}{2}\int_D \log \biggl[\int \exp[zg(x,y)]\mu(dz)\biggr]dxdy
\end{align*}
and
$$
\lim_{\epsilon\to 0}\inf_{k\in B(f,\epsilon)}\langle k, g\rangle=\langle f, g\rangle
$$
We can let $\delta\to 0$ at the end. If $I(f)=\infty$, pick $g$ such that
$$
\frac{1}{2}\bigg[\int f(x,y) g(x,y)dxdy-\int  \log \biggl[\int e^{z\,g(x,y)}\mu(dz)\biggr]dxdy\bigg] \ge L
$$
and let $L\to\infty$ in the end.

\medskip
   This proves \eqref{wub}. But to prove it for balls in $\tilde\cK$ we need to estimate the probability of the pre-image in $\cK$ of a ball in $\tilde \cK$.
This amounts to showing  that
$$
\limsup_{\epsilon\to 0}\limsup_{n\to\infty}\frac{1}{n^2}\log  Q_n[k: \inf_{\sigma \in G}d_\square(k ,\sigma f)<\epsilon]\le -I({\tilde f})
$$ 
We start with the set
$$
U(\tilde f, \epsilon)=\cup_{\sigma\in G} B(\sigma f, \epsilon).
$$
We saw that for each  $f\in \cK$,  (\ref{wub}) holds. We need to replace the union over $\sigma\in G$ by a union over a finite collection. According to Lemma \ref{KS2}  there is a compact set
$W_{\ell,\epsilon}\subset \cK$ such that its orbit by $\Pi(n)$ nearly covers $\cK_n^\ell$ for all sufficiently large $n$. More precisely for $n\ge n_0(\epsilon,\ell)$, 
$$
\sup_{k\in \cK_n^\ell } \inf_{\sigma\in\Pi(n)} \inf_{g\in W_{\ell,\epsilon}}d_\square(\sigma g, k)<\epsilon.
$$
$W_{\ell,\epsilon}$ is compact in the cut topology (and even in $L^1$) and can be covered by the union of balls $B(g,\epsilon)$ of radius $\epsilon$ centered around $g$ from  a finite collection $ F_{\ell, \epsilon}$. Therefore for sufficiently  large $n$
$$
\cup_{\sigma\in \Pi(n)}\cup_{g\in F_{\ell,\epsilon}} B(\sigma g, 2\epsilon)\supset \cK_n^\ell
$$
We need to estimate the probability under $Q_n$ of 
$$
\cup_{\sigma'\in \Pi(n)}\cup_{g\in F_{\ell, \epsilon}}\cup_{\sigma\in G} [B(\sigma' g, 2\epsilon)\cap B(\sigma f,\epsilon)]
$$
which is at most $n!$ times 
$$
\sup_{\sigma'\in \Pi(n)}Q_n[  \cup_{g\in F_{\ell, \epsilon}}\cup_{\sigma\in G}  [B( \sigma' g, 2\epsilon)\cap B(\sigma f, \epsilon)]]
$$ 
Since $F_{\ell,\epsilon}$ is  a finite set   independent of $n$ and $Q_n$ is invariant under $\Pi(n)\subset G$, it is enough to show that for each $g\in F_{\ell,\epsilon}$
$$
\limsup_{\epsilon\to 0}\limsup_{n\to\infty}\frac{1}{n^2}\log Q_n[ B(g, 2\epsilon)\cap \cup_{\sigma\in G}  B(\sigma f, \epsilon)]\le -I(f)
$$
If the intersection is nonempty, then there is a $\sigma'$ such that $B(g, 2\epsilon)\subset B(\sigma' f, 5\epsilon) $.
Thus, from  (\ref{wub}), the invariance of $I(\sigma f)$ under $\sigma\in G$ and the lower semicontinuity of $I(\cdot)$ on $\tilde \cK$ we see that there is a $\sigma'\in G$ such that 
\begin{align*}
\limsup_{\epsilon\to 0}&\limsup_{n\to\infty}\frac{1}{n^2}\log Q_n[ B(g, 2\epsilon)\cap \cup_{\sigma\in G}  B(\sigma f, \epsilon)]\\
&\le \limsup_{\epsilon\to 0}\limsup_{n\to\infty}\frac{1}{n^2}\log Q_n[ B(\sigma' f, 5\epsilon)]\\
&=-I(\sigma' f) = -I(f).
\end{align*}
\end{proof}

\section{Truncation}
Given $X=\{x_{i,j}\}$ we truncate it at level $\ell$. Let  $x=f_\ell(x)+g_\ell(x)$ where
\begin{align*}
f_\ell(x)=
\begin{cases}
x &\ {\rm if}\ |x|\le \ell\\
\ell &\ {\rm if}\ x\ge \ell\\
-\ell & \  {\rm if }\ x\le -\ell
\end{cases}
\end{align*}
and $g_\ell(x)= x-f_\ell(x)$.  There is a corresponding  decomposition  of $k=f_\ell(k)+ k-f_\ell(k)$ for $k\in \cK$.  We have the estimate 
$$
\limsup_{\ell\to\infty} \limsup_{n\to\infty} \frac{1}{n^2}\log \bbE \exp\Big[\theta\sum_{i,j}[x_{i,j}-f_\ell(x_{i,j})]^2\Big]=0 
$$
for any $\theta>0$. If $\{f_\ell(x_{i,j})\}$  and $\{x_{i,j}\}$ are  mapped respectively into $k_\ell$ and $k$,  we have super exponential estimates on   $\Delta(\ell)=\int_D |f_\ell(k(x,y))-k(x,y)|^2 dxdy$:
$$
Q_n[ \Delta (\ell) \ge \epsilon]\le e^{-n^2 \theta\epsilon^2}\bbE\exp\Big[\theta \sum_{i,j}  [x_{i,j}-f_\ell(x_{i,j})]^2 \Big].
$$
Therefore
$$
\limsup_{\ell\to\infty}\limsup_{n\to\infty} \frac{1}{n^2}\log Q_n[ \Delta(\ell)\ge \epsilon]\le - \theta\epsilon^2.
$$
Since $\theta>0$ is arbitrary, for any $\epsilon>0$,
$$
\limsup_{\ell\to\infty}\limsup_{n\to\infty} \frac{1}{n^2}\log Q_n[ \Delta(\ell)\ge \epsilon]=-\infty.
$$
It is easy to complete the proof of Theorem \ref{main2} using the above identity and the lower-semicontinuity of $I$.

The difference between the eigenvalues of two kernels $k_1$ and $k_2$ can be easily controlled by the Hilbert-Schmidt norm
of the difference $k_1-k_2$. In fact using the variational formula for successive eigenvalues of a compact self adjoint
operator, 
$$
\lambda_{j+1}(A)=\inf_{\cM:\atop codim(\cM)=j}\sup_{\|z\|=1\atop z\in\cM} \langle Az,z\rangle
$$
it is easily  seen  that if $\lambda_j(A)$ and $\lambda_j(B)$ are the $j$th largest positive eigenvalues of  two compact self adjoint operators $A$ and $B$
$$
|\lambda_j(A)-\lambda_j(B)|\le \|A-B\|\le \|A-B\|_{HS}
$$  
and a similar formula holds for negative eigenvalues as well. Clearly if $k_1,k_2\in\cK$, then $d(S(k_1),S(k_2))\le \|k_1-k_2\|\le \|k_1-k_2\|_{HS}$.

\medskip
If  $C\in \cS$ is a closed set, and we truncate $X$ at level $\ell$  and denote by $k_\ell$ and $k$ the two images in $\cK$, then
$$
P_n(C)=Q_n[S(k)\in C]\le Q_n [S(k_\ell)\in C^\epsilon]+Q_n[d(S(k_\ell),S(k))\ge \epsilon]
$$
Applying the upper bound for the truncated version,  
$$\limsup_{n\to\infty}\frac{1}{n^2}\log P_n[C]\le \max\{-\inf_{S\in \overline{C^\epsilon}} J_\ell(S), -c(\ell,\epsilon)\}$$
where
$$
c(\ell,\epsilon)=-\limsup_{n\to\infty} \frac{1}{n^2}\log Q_n[d(S(k_\ell),S(k))\ge \epsilon]
$$
and for any $\epsilon>0$,
$$
\lim_{\ell\to\infty}c(\ell,\epsilon)=-\infty.
$$
Moreover
$$
J_\ell(S)=\inf_{k: S(k)=S}I_\ell(k); \quad I_\ell(k)=\int_D h_\ell(k(x,y))dxdy
$$
and
$$
h_\ell(z)=\sup[z\theta-\log\int \exp[\theta f_\ell(x)]\mu(dx)].
$$
The upper bound is now easily established by letting $\ell\to\infty$ and then letting $\epsilon\to 0$. We observe that the superexponential estimate implies $c(\ell,\epsilon)\to \infty$ as $\ell\to\infty$ for any $\epsilon >0$. This completes the proof of Theorem \ref{main}.

\noindent 

\end{document}